\documentclass[12pt, reqno]{amsart}
\usepackage{amsmath, amsthm, amscd, amsfonts, amssymb, graphicx }
\usepackage{mathtools}
\usepackage{blkarray, bigstrut} %

\textheight 22truecm \textwidth 14truecm
\setlength{\oddsidemargin}{0.35in}\setlength{\evensidemargin}{0.35in}

\setlength{\topmargin}{-.8cm}

\newtheorem{theorem}{Theorem}[section]

\newtheorem{corollary}[theorem]{Corollary}
\theoremstyle{definition}

\newtheorem{example}[theorem]{Example}

\theoremstyle{remark}
\newtheorem{remark}[theorem]{Remark}
\numberwithin{equation}{section}

\begin{document}
	
	\setcounter{page}{1}
	\title[Property $(UW_E)$ and Consistent invertibility]{ Property $(UW_E)$ and Consistent invertibility }
	\author[  simi thomas, t prasad   and shery fernandez]{ simi thomas, t prasad   and shery fernandez }
	
	\address{Simi Thomas \endgraf Department of Mathematics\endgraf Cochin University of science and technology  \endgraf
		Kerala, \endgraf
		India -682022}
	\email{simimariumthomas@cusat.ac.in }
	\address{T Prasad  \endgraf Department of Mathematics\endgraf University of Calicut \endgraf
		Kerala, \endgraf
		India- 673635.}
	\email{prasadvalapil@gmail.com }
	\address{Shery Fernandez \endgraf Department of Mathematics\endgraf Cochin University of science and technology  \endgraf
		Kerala, \endgraf
		India -682022}
	\email{sheryfernandez@cusat.ac.in }
	\dedicatory{ }

	\subjclass[2010]{47A10, 47A53, 47A55}
	
	\keywords{Property $(UW_E)$, Consistent invertibility, Perturbation, Tensor Product, Tensor Sum}
	
\begin{abstract}
	In this paper,  we study the stability of Property $(UW_E)$ under commuting finite rank perturbation in terms of spectrum induced by consistent in invertibility for operators and a variant of upper semi-Weyl spectrum. Also we discuss the stability of Property $(UW_{E})$ under direct sum $T\oplus S$ and tensor product $T\otimes S$ for Hilbert space operators $T$ and $S$.
\end{abstract}\maketitle
\section{Introduction and Preliminaries}
Throughout this paper, $B(\mathcal{H})$ and $K(\mathcal{H})$ denotes the algebra of all bounded linear operators and the ideal of all compact operators on infinite dimensional complex separable Hilbert space $\mathcal{H}$ respectively. The adjoint, the kernel, the range, the nullity, and the defect for an operator $T \in B(\mathcal{H})$ are denoted by $T^*$, $\mathcal{N}(T)$, $\mathcal{R}(T)$, $\alpha(T)$, and $\beta(T)$ respectively. An operator $T$ is said to be an upper semi-Fredholm (resp., lower semi-Fredholm) operator if $\alpha (T)< \infty $ (resp., $\beta(T)< \infty$) and $R(T)$ is closed. Let 
\begin{eqnarray*}
	\Phi_+(\mathcal{H})&=&\lbrace T \in B(\mathcal{H}): T \text{ is upper semi-Fredholm} \rbrace ~\text{and }\\
	 \Phi_-(\mathcal{H})&=&\lbrace T \in B(\mathcal{H}): T \text{ is lower semi-Fredholm} \rbrace 
\end{eqnarray*}
be the collection of all upper and lower semi-Fredholm operators.
   An operator $T$ is called semi-Fredholm, $  T \in SF(\mathcal{H}) $, if $T \in \Phi_+(\mathcal{H}) \cup \Phi_-(\mathcal{H})$. Let $\Phi(\mathcal{H})= \Phi_-(\mathcal{H})\cap\Phi_+(\mathcal{H})$ denote the collection of all Fredholm operators. Index of an operator $T\in SF(\mathcal{H})$ is given by 
   \begin{eqnarray*}
   i(T)&=& \alpha(T)-\beta(T).
   \end{eqnarray*}
    An operator $T \in B(\mathcal{H})$ is Weyl if it is Fredholm of index zero. Let
\begin{eqnarray*}
SF_+(\mathcal{H}) & =& \lbrace T \in SF: i(T)\leq0 \rbrace;\\
SF_-(\mathcal{H})  &=& \lbrace T \in SF: i(T)\geq 0 \rbrace 
\end{eqnarray*}
be the collection of all upper semi-Weyl operators and the collection of all lower semi-Weyl operators respectively.
Ascent of an operator $T$, $p:=p(T)$, and descent of an operator $T$, $q:=q(T)$, is given by
\begin{eqnarray*}
p&=& \text{min} \lbrace n\in \mathbb{N} \cup \lbrace\infty \rbrace : \mathcal{N}(T^n)=\mathcal{N}(T^{n+1})\rbrace ~\text{and}\\
q&=& \text{min} \lbrace n\in \mathbb{N} \cup \lbrace\infty \rbrace : R(T^n)=R(T^{n+1})\rbrace.
\end{eqnarray*}
  If ascent and descent are both finite, then $p(T)=q(T)$\cite[Theorem 1.20]{aiena}. Fredholm operators with finite ascent and descent is termed as Browder operator. We denote
\begin{center}
$BR(\mathcal{H})= \lbrace T \in B(\mathcal{H}): T ~\text{is Browder} \rbrace.$
\end{center}
 An operator $T\in B(\mathcal{H})$ is said to be Drazin invertible if $p(T)=q(T)<\infty$. Let \begin{center}
 	$DI(\mathcal{H})= \lbrace T \in B(\mathcal{H}): T ~\text{is Drazin invertible} \rbrace.$ \end{center} 
It is well known that Drazin invertible operators with finite nullity is Browder. The essential spectrum $\sigma_{e}(T)$, the Wolf spectrum $\sigma_{lre}(T)$, the Weyl spectrum $\sigma_w(T)$, the upper semi-Weyl spectrum $\sigma_{uw}(T)$, the lower semi-Weyl spectrum $\sigma_{lw}(T)$,   the Browder spectrum $\sigma_{b}(T)$ and the Drazin invertible spectrum $\sigma_{dz}(T)$ of $T \in B(\mathcal{H})$ are given by
\begin{eqnarray*}
	\sigma_e(T) & = & \lbrace \lambda \in \mathbb{C} : T- \lambda I \notin \Phi(\mathcal{H}) \rbrace, \\
	\sigma_{lre}(T) &=& \lbrace \lambda \in \mathbb{C} : T- \lambda I \notin SF(\mathcal{H}) \rbrace,\\
	\sigma_w(T) &=& \lbrace \lambda \in \mathbb{C}  : T-\lambda I \text{\,\,\,is not Weyl} \rbrace,\\
	\sigma_{uw}(T) &=& \lbrace \lambda \in \mathbb{C}  : T-\lambda I \notin SF_+(\mathcal{H}) \rbrace,\\
	\sigma_{lw}(T) &=& \lbrace \lambda \in \mathbb{C}  : T-\lambda I \notin SF_-(\mathcal{H}) \rbrace, \\
	\sigma_{b}(T) &=& \lbrace \lambda \in \mathbb{C}  : T-\lambda I \notin BR(\mathcal{H}) \rbrace,\\
	\sigma_{dz}(T) &=& \lbrace \lambda \in \mathbb{C}  : T-\lambda I \notin DI(\mathcal{H}) \rbrace.
\end{eqnarray*}
The semi-Fredholm domain of $T$ is denoted as $\rho_{SF}(T)= \mathbb{C} \setminus \sigma_{lre}(T)$ and let 
\begin{eqnarray*}
	\rho_{SF}^+(T)&:=&\lbrace \lambda \in \rho_{SF}(T) : i(T-\lambda I) >0\rbrace,\\
	\rho_{SF}^-(T)&:=&\lbrace \lambda \in \rho_{SF}(T) : i(T-\lambda I) <0\rbrace, \\
	\rho_{SF}^0(T)&:=&\lbrace \lambda \in \rho_{SF}(T) : i(T-\lambda I) =0\rbrace.
\end{eqnarray*}
It is evident that $\sigma_{w}(T)= \mathbb{C}\setminus \rho_{SF}^0(T)$ and $\rho_{SF}(T)=\rho_{SF}^0(T) \cup \rho_{SF}^+(T) \cup \rho_{SF}^-(T)$. Given $\emptyset \neq \sigma \subseteq \sigma(T)$, a clopen subset. Then there exists a $\Omega$, which is an analytic Cauchy domain such that $\sigma\subseteq \Omega$ and $[\sigma(T)\setminus \sigma] \cap \overline{\Omega} = \emptyset$. The operator $E(\sigma; T)$ is defined by
\begin{center} 
	$E(\sigma; T) = \frac{1}{2\pi i}\int_{\Gamma}(T-\lambda I)^{-1}d\lambda$,
 \end{center}
where $\Gamma = \partial \Omega$ is positively oriented with respect to $\Omega$. The operator
 $E(\sigma; T)$ commutes with every operator that commutes with $T$ often called Riesz idempotent operator. Let $H(\sigma; T)= R(E(\sigma; T))$. We simply write $H(\lambda; T)$ instead of $H(\lbrace \lambda \rbrace; T)$. An isolated point $\lambda$ of $\sigma(T)$ is called a normal eigenvalue
of $T$ if $\text{dim} H(\lambda; T) <\infty$. The set of all normal eigenvalues of $T$ will be denoted as $\sigma_0(T)$. Let $iso~\sigma(T)$ denote the isolated points of $\sigma(T)$.
We denote 
\begin{eqnarray*}
E(T)&:=&\lbrace \lambda \in iso~\sigma(T) :\alpha (T-\lambda I) >0\rbrace~\text{and}\\
E_0(T)&:=&\lbrace \lambda \in iso~\sigma(T) :0<\alpha (T-\lambda I) <\infty\rbrace
\end{eqnarray*}
If for every open disc $D$ centered at $\lambda$, the only analytic function $f : D \rightarrow X$ that satisfies the equation $(\lambda I - T)f(\lambda) = 0$ for all $\lambda \in D$ is the function $f \equiv 0$, then $T \in B(\mathcal{H})$ is said to have the single valued extension property at $\lambda \in \mathbb{C}$ (abbreviated T has SVEP at $\lambda$).
\\
 Herman Weyl \cite{weyl} examined that the intersection of spectrum of all compact perturbation of self adjoint operators are exactly the points in the spectrum that are not isolated eigenvalues of finite multiplicity. This was soon extended to normal operators. 
According to Coburn \cite{cob}, an operator $T$ is said to satisfy Weyl's theorem, if its Weyl spectrum $\sigma_w(T)$ consists of  those points in the spectrum $\sigma(T)$ that are not isolated eigenvalues of finite multiplicity. That is,
\begin{center}
	$\sigma(T)\setminus\sigma_{w}(T)=E_0(T)$.
\end{center} 
 Coburn \cite{cob} studied Weyl's theorem for nonnormal operators and showed that Weyl's theorem holds for classes of hyponormal operators and Toeplitz operators. Weyl-type theorems and its variations has been studied for a variety of operators and studied by many authors on both Hilbert spaces and Banach spaces \cite{aiena,apon,azn,berk,rashid,sana,zhou,zhu}. Rakočević \cite{racko} introduced and studied  $a$- Weyls theorem. An operator $T$ satisfies $a$- Weyls theorem if
\begin{center}
	$\sigma_a(T) \setminus  E_{0}^a(T)=\sigma_{uw}(T)$.
\end{center}
An operator  $T$ is said to satisfy  property $(w)$  if 
 \begin{center}
$\sigma_a(T)\setminus  E_0(T)=\sigma_{uw}(T)$.
\end{center}
  According to \cite{sana} an operator $T$ is said to satisfy property $(V_E)$ if 
  \begin{center}
  	$\sigma(T) \setminus  E(T)=\sigma_{uw}(T)$.
  \end{center} 
Property $(UW_E)$, a variant of Weyl-type theorem has been introduced by Berkani and Kachad \cite{berk}, studied by many authors \cite{azn,li,pra,qiu,sana,simi,sun}. Let $T \in B(\mathcal{H})$. Then $T$ satisfies property $(UW_{E})$, if 
\begin{center}
	$\sigma_a(T)\setminus  E(T)=\sigma_{uw}(T)$.
\end{center}  Property $(w)$ does not implies $(UW_E)$ (see example 3.2). Qiu and Cao \cite{qiu} studied some examples to show that property ($UW_E$) and $a$- Weyls theorem are independent. In \cite{pra,simi}, the authors give sufficient conditions for functions of operators satisfying property $(UW_E)$ and $T+K $ satisfying property $(UW_E)$ for all compact operators $K$. 

In this paper, we explored certain conditions for the stability of property $(UW_E)$ satisfied by the operators under commuting finite rank perturbation using the spectrum induced by consistent in invertibility and also the stability of property $(UW_E)$ under tensor product and direct sum under certain conditions. 

\section{Consistent Invertibility for property $(UW_E)$}
Consistent in invertibility for operators has been studied by Hladnik and Omladic \cite{hladnik} and later it was studied by many authors see \cite{djor,gong,ren,xin}. An operator $T\in B(\mathcal{H})$ is said to be consistently invertible if for each $S \in B(\mathcal{H})$, $TS$ and $ST$ are either both or neither invertible. Gong and Han \cite{gong} given that all normal, compact and invertible operators are consistently invertible and they characherize it for some nonnormal operators. Let
\begin{center}
	$CI(\mathcal{H})= \lbrace T \in B(\mathcal{H}): T ~\text{is consistently invertible} \rbrace.$\\
\end{center}
The consistent invertibilty spectrum $\sigma_{CI}(T)$ and is given by
\begin{center} 
	$\sigma_{CI}(T) = \lbrace \lambda \in \mathbb{C}  : T-\lambda I \notin CI(\mathcal{H}) \rbrace.$
\end{center}
It is evident that $\sigma_{CI}(T)$ is contained in $\sigma_{b}(T)$.

The following example shows that property $(UW_E)$ may not be stable under commuting finite rank perturbations.
\begin{example}
	Let  \begin{equation*}
		T = 
		\begin{pmatrix}
			T_1 &  0\\
			0 & I 
		\end{pmatrix}, K = 
		\begin{pmatrix}
			0 &  0\\
			0 & K_1 
		\end{pmatrix}
	\end{equation*} on $l^2 \oplus l^2$, where $T_1(x_1,x_2,x_3,..)=(0,x_1,\frac{x_2}{2},\frac{x_3}{3},...)$ and $K_1(x_1,x_2,x_3,..)=(-x_1,0,0,...)$. Then\begin{center}
	$\sigma_a(T+K)=E(T+K)=\lbrace 0,1\rbrace$. 
\end{center} 
Since, $0\in\sigma_{uw}(T+K)$, it follows that $T+K$ does not satisfies property $(UW_E)$. 
\end{example} Let $\sigma_{vv}(T= \mathbb{C} \setminus \rho_{vv}(T)$, where

 $\rho_{vv}(T):=\lbrace \lambda \in \mathbb{C} : \text{there exists an}\rm~ \epsilon> 0 \rm~ \text{such that} \rm~ T-\mu I \in W_+(\mathcal{H}) \rm~ \vspace{.25 cm} and  \rm~ \mathcal{N}(T-\mu I)\subseteq \bigcap_{n=1}^{\infty} \mathcal{R}(T-\mu I)^n $  if $\rm~ 0<|\lambda-\mu|< \epsilon\rbrace$.
Then 
\begin{center}
	$\sigma_{vv}(T) \subseteq \sigma_{uw}(T) \subseteq \sigma_b(T) \subseteq \sigma(T)$.
\end{center} 
Xin and Jiang studied the stability of property $(w)$ under commuting finite rank perturbation. See \cite{xin} for more deatils. In a similar manner, 
 we study the stability of property $(UW_E)$ under commuting finite rank perturbation by using the variant of the upper semi-Weyl spectrum $\sigma_{vv}(T)$ and the consistent invertibilty spectrum $\sigma_{CI}(T)$.
\begin{theorem}
	Suppose that $T \in B(\mathcal{H})$ satisfies property $(UW_E)$. If $K$ is a finite rank operator such that $[K,T]=0$, then the following are equivalent.
	\begin{enumerate}
	\item[(i)] $T + K$ is isoloid and satisfies property $(UW_E)$. 
	\item[(ii)] $T$ is isoloid and $T+K$ satisfies property $(UW_E)$. 
	\item[(iii)] $\sigma_b(T ) \cap \sigma_a(T + K) = \sigma_{vv}(T ) \cup [\overline{\sigma_{CI} (T )} \cap \sigma_a(T )]$.
\end{enumerate}
\end{theorem}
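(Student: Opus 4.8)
The plan is to make the spectral identity (iii) the hub and to prove $(\mathrm{i})\Leftrightarrow(\mathrm{iii})$ and $(\mathrm{ii})\Leftrightarrow(\mathrm{iii})$, so that $(\mathrm{i})\Leftrightarrow(\mathrm{ii})$ falls out automatically. First I would record the invariances forced by $K$: since $K$ is compact, $\sigma_{uw}(T+K)=\sigma_{uw}(T)$, and since $K$ has finite rank and commutes with $T$ (hence is a commuting Riesz perturbation), $\sigma_{b}(T+K)=\sigma_{b}(T)$. The single fact I will use repeatedly is an elementary reformulation of property $(UW_E)$: because $\sigma_{uw}(S)\subseteq\sigma_a(S)$ and because every $\lambda\in\sigma_a(S)\setminus\sigma_{uw}(S)$ makes $S-\lambda I$ upper semi-Weyl with closed range but not bounded below, so $\alpha(S-\lambda I)>0$, an operator $S$ satisfies $(UW_E)$ if and only if $E(S)=\sigma_a(S)\setminus\sigma_{uw}(S)$. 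Applied to $S=T$ this is exactly the hypothesis, and applied to $S=T+K$ it reads $E(T+K)=\sigma_a(T+K)\setminus\sigma_{uw}(T)$; this is the equation I will try to produce from, or feed into, (iii).

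Next I would pin down the two auxiliary spectra in (iii). A short computation from the definition shows that $T-\lambda I$ fails to be consistently invertible precisely when it is one-sided invertible but not invertible; writing $\sigma_{su}(T)$ for the surjectivity spectrum (the $\lambda$ with $T-\lambda I$ not onto), this gives the clean formula $\sigma_{CI}(T)=\sigma_a(T)\,\triangle\,\sigma_{su}(T)=\sigma(T)\setminus(\sigma_a(T)\cap\sigma_{su}(T))$, which in particular re-proves $\sigma_{CI}(T)\subseteq\sigma_b(T)$. Consequently $\overline{\sigma_{CI}(T)}\cap\sigma_a(T)\subseteq\sigma_b(T)\cap\sigma_a(T)$, while $\sigma_{vv}(T)\subseteq\sigma_{uw}(T)=\sigma_{uw}(T+K)\subseteq\sigma_a(T+K)$ together with $\sigma_{vv}(T)\subseteq\sigma_b(T)$ puts $\sigma_{vv}(T)$ inside the left-hand side of (iii) for free. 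Thus the inclusion ``$\supseteq$'' of (iii) reduces to the single nontrivial point that $K$ cannot delete an approximate eigenvalue sitting on $\overline{\sigma_{CI}(T)}$, i.e. $\overline{\sigma_{CI}(T)}\cap\sigma_a(T)\subseteq\sigma_a(T+K)$; here I would exploit $[K,T]=0$, so that $T$ and $T+K$ are simultaneously reduced by the Riesz idempotents $E(\sigma;\cdot)$, to compare the two approximate point spectra on the common invariant subspaces.

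For the forward implications $(\mathrm{ii})\Rightarrow(\mathrm{iii})$ and $(\mathrm{i})\Rightarrow(\mathrm{iii})$ I would split $\sigma_a(T+K)$ along $\sigma_b(T)$: outside $\sigma_b(T)=\sigma_b(T+K)$ the points of $\sigma_a(T+K)$ are Riesz points, hence isolated eigenvalues of finite type lying in $E(T+K)$ and disjoint from $\sigma_{uw}(T)$, so they do not interfere with the identity, and the whole content lives on $\sigma_b(T)\cap\sigma_a(T+K)$. There, using $(UW_E)$ for $T+K$ and the reformulation above, together with the isoloid hypothesis (on $T+K$ for (i), on $T$ for (ii)) to guarantee that every isolated point of the spectrum is an eigenvalue, I would match the points of $\sigma_b(T)\cap\sigma_a(T+K)$ lying off $\sigma_{vv}(T)$ with $\overline{\sigma_{CI}(T)}\cap\sigma_a(T)$. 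The reverse implications $(\mathrm{iii})\Rightarrow(\mathrm{i})$ and $(\mathrm{iii})\Rightarrow(\mathrm{ii})$ run the same computation backwards: intersecting (iii) with the complement of $\sigma_{uw}(T)$ and adjoining the Riesz points recovers exactly $E(T+K)=\sigma_a(T+K)\setminus\sigma_{uw}(T)$, which is $(UW_E)$ for $T+K$, while the description of the isolated points coming out of the identity yields the corresponding isoloid conclusion.

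The step I expect to be the main obstacle is the ``$\subseteq$'' inclusion in (iii): showing that a point $\lambda\in\sigma_b(T)\cap\sigma_a(T+K)$ with $\lambda\notin\sigma_{vv}(T)$ must satisfy $\lambda\in\sigma_a(T)$ and $\lambda\in\overline{\sigma_{CI}(T)}$. Here one has the punctured neighbourhood of $\lambda$ furnished by $\lambda\in\rho_{vv}(T)$, on which $T-\mu I$ is upper semi-Weyl and $\mathcal{N}(T-\mu I)\subseteq\bigcap_{n\ge1}\mathcal{R}(T-\mu I)^n$; the difficulty is to convert this uniform semi-Fredholm / Kato-type behaviour near $\lambda$, the commutation $[K,T]=0$, and the isoloid hypothesis into the two conclusions simultaneously, controlling precisely which isolated spectral points are created or destroyed by the finite rank perturbation. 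Isolating a lemma that says ``a Browder point of $T$ that becomes approximate for $T+K$ but is regular in the $\rho_{vv}$ sense is a boundary point of $\sigma_{CI}(T)$ inside $\sigma_a(T)$'' is, I expect, the crux on which the whole equivalence turns.
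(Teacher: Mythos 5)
Your preliminary reductions are sound and essentially match what the paper uses implicitly: the stability $\sigma_{uw}(T+K)=\sigma_{uw}(T)$ and $\sigma_b(T+K)=\sigma_b(T)$, the set-theoretic reformulation of $(UW_E)$ as $E(S)=\sigma_a(S)\setminus\sigma_{uw}(S)$, and the characterization of $\sigma_{CI}(T)$ as the one-sided-invertible-but-not-invertible points (which is exactly what makes the $\sigma_{CI}$ hypothesis usable). The ``$\supseteq$'' half of (iii) is also handled correctly in outline. But the proposal has a genuine gap at precisely the point you flag yourself: the inclusion $\sigma_b(T)\cap\sigma_a(T+K)\subseteq\sigma_{vv}(T)\cup[\overline{\sigma_{CI}(T)}\cap\sigma_a(T)]$ and the recovery of (i)/(ii) from (iii) are left to an unproven ``crux lemma'' (``a Browder point of $T$ that becomes approximate for $T+K$ but is regular in the $\rho_{vv}$ sense is a boundary point of $\sigma_{CI}(T)$ inside $\sigma_a(T)$''). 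That lemma is the theorem; without it nothing is proved. Moreover, the forward form in which you pose it --- trying to conclude \emph{simultaneously} that $\lambda\in\sigma_a(T)$ and $\lambda\in\overline{\sigma_{CI}(T)}$ --- is an awkward target, and I do not see how your Riesz-idempotent decomposition would deliver it.

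The missing idea is to argue by contraposition and split on the two members of the union. Take $\lambda_0\notin\sigma_{vv}(T)\cup[\overline{\sigma_{CI}(T)}\cap\sigma_a(T)]$; the $\rho_{vv}$ condition gives a punctured disc on which $T-\lambda$ is upper semi-Weyl and semi-regular, hence bounded below. Now either $\lambda_0\notin\overline{\sigma_{CI}(T)}$, in which case $T-\lambda$ is consistently invertible \emph{and} bounded below near $\lambda_0$, and by your own characterization of $\sigma_{CI}$ this forces $T-\lambda$ to be invertible on the punctured disc, so $\lambda_0\in\mathrm{iso}\,\sigma(T)\cup\rho(T)$ and the isoloid hypothesis together with $(UW_E)$ upgrades $\lambda_0$ to a Browder point; or else $\lambda_0\notin\sigma_a(T)$, in which case $\lambda_0\notin\sigma_{uw}(T+K)$, and either $\lambda_0\notin\sigma_a(T+K)$ (done) or $\lambda_0\in\sigma_a(T+K)\setminus\sigma_{uw}(T+K)$ and property $(UW_E)$ for $T+K$ makes $T+K-\lambda_0I$ Browder, so $\lambda_0\notin\sigma_b(T)$. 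The implication $(\mathrm{iii})\Rightarrow(\mathrm{i})$ is then a direct two-sided verification of $E(T+K)=\sigma_a(T+K)\setminus\sigma_{uw}(T+K)$ using the identity to rule out the bad cases, not a reversal of the computation above. Your hub structure ($(\mathrm{i})\Leftrightarrow(\mathrm{iii})$ and $(\mathrm{ii})\Leftrightarrow(\mathrm{iii})$) versus the paper's cycle $(\mathrm{i})\Rightarrow(\mathrm{ii})\Rightarrow(\mathrm{iii})\Rightarrow(\mathrm{i})$ is immaterial; what is material is that the case split on $\overline{\sigma_{CI}(T)}$ versus $\sigma_a(T)$, combined with the invertibility criterion ``bounded below $+$ consistently invertible $\Rightarrow$ invertible,'' is the engine of the proof, and it is absent from your plan.
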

\begin{proof}
	(i)$\Longrightarrow$(ii): Suppose $T+K$ is isoloid and satisfies property $(UW_E)$. It is enough to prove $T$ is isoloid. Let $\lambda \in \text{iso}~\sigma(T)$ but $\alpha(T-\lambda I)=0$. Since $\lambda \in \text{iso}~\sigma(T)$ and $K$ is of finite rank, we have $\lambda \in \text{iso}~\sigma(T+K) \cup \rho(T+K)$. Since $T+K$ is isoloid, $\lambda \in E(T+K)=\Pi(T+K)$. Thus $T+K-\lambda I \in DI(\mathcal{H})$. Since $T+K-\lambda I \in  W_+(\mathcal{H})$, $T+K-\lambda I$ is Browder. Which implies $\lambda \notin \sigma_b(T)$. Then $T-\lambda I$ is invertible, a contradiction. Hence  $T$ is isoloid.\\
	(ii)$\Longrightarrow$(iii): $T$ is isoloid and $T+K$ satisfies property $(UW_E)$.Then, from \cite[Theorem 2.1]{xin}, we have 
	\begin{center}
		$\sigma_b(T ) \cap \sigma_a(T + K) \supseteq \sigma_{v}(T ) \cup [\overline{\sigma_{CI} (T )} \cap \sigma_a(T )]$.
	\end{center}
	Since $\sigma_{v}(T)\subseteq \sigma_{vv}(T)$, we obtain
	\begin{center}
		$\sigma_b(T ) \cap \sigma_a(T + K) \supseteq \sigma_{vv}(T ) \cup [\overline{\sigma_{CI} (T )} \cap \sigma_a(T )]$.
	\end{center}
	 Let $\lambda_0\notin\sigma_{vv}(T ) \cup [\overline{\sigma_{CI} (T )} \cap \sigma_a(T )]$. Then there exist an $\epsilon>0$ such that 
	$T-\lambda I \in W_+(\mathcal{H}) \rm~  and \rm~ \mathcal{N}(T-\mu I)\subseteq \bigcap_{n=1}^{\infty} \mathcal{R}(T-\mu I)^n\rm~  if \rm~ 0<|\lambda-\lambda_0|< \epsilon$. Since property $(UW_E)$ hold for $T$ and $T-\lambda I$ is bounded below if $0<|\lambda-\lambda_0|<\epsilon$,  we have to consider two cases, $\lambda_0 \notin \overline{\sigma_{CI} (T )}$ and $\lambda_0\notin\sigma_{a}(T)$.\\ If $\lambda_0 \notin \overline{\sigma_{CI} (T )}$, then $T-\lambda I$ is invertible for small $|\lambda-\lambda_0|>0$. Then $\lambda_0 \in \text{iso}\rm~\sigma(T)\cup\rho(T)$. This implies $\lambda_0\in \text{iso}\rm~\sigma(T+K)\cup\rho(T+K)$. Since $T+K$ is isoloid, $\lambda_0 \in E(T+K)$. Thus $T+k-\lambda_0 I$ is Browder as same in the first part. Then $T-\lambda_0I$ is Browder. That is $\lambda_0\notin\sigma_b(T)\cap \sigma_a(T+K)$.\\
	 If $\lambda_0\notin\sigma_{a}(T)$, then either $\lambda_0\notin\sigma_{a}(T+K)$ or $\lambda_0\in\sigma_{a}(T+K)\setminus\sigma_{uw}(T+K)$. Suppose $\lambda_0\in\sigma_{a}(T+K)\setminus\sigma_{uw}(T+K)$. Then, $T+K-\lambda_0I$ is a Browder operator. Since $T\in(UW_E)$, $T-\lambda_0I$ is Browder. Thus, $\lambda_0\notin\sigma_b(T)\cap\sigma_{a}(T+K)$.\\
	(iii)$\Longrightarrow$(i): Assume that 
	\begin{center}
		
		$\sigma_b(T)\cap\sigma_{a}(T+K)=\sigma_{vv}(T)\cup[\overline{\sigma_{CI}}\cap\sigma_{a}(T)]$.
	\end{center}
	Let $\lambda_0\in \sigma_a(T+K)\setminus\sigma_{uw}(T+K)$. Then, $T+K-\lambda_0 I $ is upper semi-Weyl and consequently, $T-\lambda_0 I $ is upper semi-Weyl. If $\lambda_0 \in\sigma_a(T)$, then $T-\lambda_0 I$ is Browder . Thus $T+K-\lambda_0 I$ is Browder. If $\lambda_0\notin\sigma_a(T)$, then $$\lambda_0 \notin \sigma_{vv}(T)\cup [\overline{\sigma_{CI}(T)} \cap \sigma_a(T)].$$ Which implies that $\lambda_0\notin\sigma_b(T)\cap\sigma_a(T+K)$. Since $\lambda_0\in\sigma_{a}(T+K)$, $\lambda_0\in\sigma_b(T)$. Then $T+K-\lambda_0I$ is Browder. Which implies $\lambda_0\in E(T+K)$.
	Conversly, $\lambda_0\in E(T+K)$ implies $\lambda_0\in \text{iso}\sigma(T)\cup \rho(T)$. We have $\lambda_0 \in\text{iso}\sigma(T)$ implies $\lambda_0 \notin \sigma_{vv}(T)\cup [\overline{\sigma_{CI}(T)} \cap \sigma_a(T)]$. Thus $\lambda_0 \notin\sigma_b(T)$ . This implies $T+K-\lambda_0I$ is Browder. Hence $\lambda_0\in \sigma_a(T+K)\setminus\sigma_{uw}(T+K)$.
	\end{proof}
\begin{corollary}
	Let	$T \in B(\mathcal{H})$. Then $T$ is isoloid and satisfies property $(UW_E)$ if and only if $\sigma(T ) = \sigma_{vv}(T ) \cup [\overline{\sigma_{CI} (T )} \cap \sigma_a(T )]\cup [\sigma_b(T) \cap \rho_a(T)]\cup\sigma_0(T)$.
\end{corollary}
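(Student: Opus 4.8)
The plan is to obtain the corollary from the previous theorem by taking $K=0$, and to convert the resulting spectral identity into the displayed one by a set computation valid for every operator. Two universal facts drive the conversion. First, the normal eigenvalues of $T$ are exactly the Browder points lying in $\sigma(T)$, so $\sigma_0(T)=\sigma(T)\setminus\sigma_b(T)$, whence $\sigma(T)$ is the disjoint union of $\sigma_b(T)\cap\sigma_a(T)$, $\sigma_b(T)\cap\rho_a(T)$ and $\sigma_0(T)$, where $\rho_a(T)=\mathbb{C}\setminus\sigma_a(T)$. Second, $\sigma_b(T)$ is closed and contains $\sigma_{CI}(T)$, so $\overline{\sigma_{CI}(T)}\subseteq\sigma_b(T)$, and together with $\sigma_{vv}(T)\subseteq\sigma_{uw}(T)\subseteq\sigma_a(T)$ (the last inclusion being standard) this gives $\sigma_{vv}(T)\cup[\overline{\sigma_{CI}(T)}\cap\sigma_a(T)]\subseteq\sigma_b(T)\cap\sigma_a(T)$. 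Cancelling the two summands $\sigma_b(T)\cap\rho_a(T)$ and $\sigma_0(T)$, which appear on both sides, I would show that the displayed identity is equivalent to
\[
\sigma_b(T)\cap\sigma_a(T)=\sigma_{vv}(T)\cup[\overline{\sigma_{CI}(T)}\cap\sigma_a(T)],
\]
which is exactly clause (iii) of the previous theorem specialised to $K=0$.

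Since $K=0$ is a finite rank operator commuting with $T$, clauses (i) and (ii) both read that $T$ is isoloid and satisfies property $(UW_E)$. The forward implication is then immediate: assuming $T$ is isoloid and has $(UW_E)$, the standing hypothesis of the theorem is met, clause (ii) holds, and the implication (ii)$\Rightarrow$(iii) delivers the displayed equality above, hence the identity of the corollary.

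For the reverse implication I would work directly from that equality, since the theorem itself cannot be quoted: its route (iii)$\Rightarrow$(i) was travelled under the prior assumption that $T$ satisfies $(UW_E)$, which is precisely what is now to be proved. Two of the three pieces go through cleanly. Isoloidness follows by contradiction: if some $\lambda\in\text{iso}\,\sigma(T)$ had $\alpha(T-\lambda I)=0$, local constancy of the index on the semi-Fredholm region (the punctured neighbourhood being invertible) rules out $T-\lambda I$ being upper semi-Fredholm, forcing $\lambda\in\sigma_b(T)\cap\sigma_a(T)$; the equality together with $\lambda\notin\sigma_{vv}(T)$ then forces $\lambda\in\sigma_{CI}(T)$, whereas $T-\lambda I$ is simultaneously neither bounded below nor surjective and hence lies in $CI(\mathcal{H})$ — a contradiction. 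The disjointness $\sigma_{uw}(T)\cap E(T)=\emptyset$ is handled by the same index argument applied to an isolated eigenvalue.

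The main obstacle is the remaining inclusion $\sigma_a(T)\setminus\sigma_{uw}(T)\subseteq E(T)$. Given $\lambda$ with $T-\lambda I\in W_+(\mathcal{H})$ and $\alpha(T-\lambda I)>0$, the case $\lambda\notin\sigma_b(T)$ is easy ($T-\lambda I$ is then Browder, so $\lambda$ is an isolated Riesz point and lies in $E(T)$); the whole difficulty is concentrated in the case $\lambda\in\sigma_a(T)\cap\sigma_b(T)$, where the equality only yields $\lambda\in\overline{\sigma_{CI}(T)}$. There one must rule out, or otherwise absorb, the configuration in which $\lambda$ is a non-isolated point of $\sigma(T)$ carrying a punctured neighbourhood of bounded-below, non-surjective operators. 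Controlling this interaction between the consistent-invertibility spectrum $\sigma_{CI}(T)$ and the variant upper semi-Weyl spectrum $\sigma_{vv}(T)$, via the Kato punctured-neighbourhood theorem and the semi-regularity encoded in $\lambda\notin\sigma_{vv}(T)$, is the load-bearing step of the argument and the point at which the hypotheses must be exploited to their full strength.
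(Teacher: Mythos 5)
Your reduction of the corollary to clause (iii) of the preceding theorem with $K=0$ is correct (using $\sigma_0(T)=\sigma(T)\setminus\sigma_b(T)$ and $\sigma_{vv}(T)\cup[\overline{\sigma_{CI}(T)}\cap\sigma_a(T)]\subseteq\sigma_b(T)\cap\sigma_a(T)$), as are the forward implication and the two easy pieces of the converse (isoloidness, and $E(T)\subseteq\sigma_a(T)\setminus\sigma_{uw}(T)$). The paper offers no argument for the corollary beyond its placement after the theorem, and you have put your finger on exactly why the intended derivation fails: the theorem's equivalence is established under the standing hypothesis that $T$ already satisfies $(UW_E)$, so with $K=0$ it yields only ``if $T\in(UW_E)$ then $T$ is isoloid if and only if (iii) holds,'' never the converse direction of the corollary.

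The gap you isolate --- proving $\lambda\in E(T)$ when $\lambda\in\sigma_a(T)\setminus\sigma_{uw}(T)$ lies in $\sigma_b(T)\cap\overline{\sigma_{CI}(T)}$ --- cannot be closed, because the configuration you hoped to rule out actually occurs. Take $T=U\oplus 0$ on $l^2\oplus\mathbb{C}$, where $U$ is the unilateral shift. Then $\sigma(T)=\sigma_b(T)=\{\lambda:|\lambda|\le 1\}$ has no isolated points, so $T$ is (vacuously) isoloid and $E(T)=\sigma_0(T)=\emptyset$; moreover $\sigma_a(T)=\{|\lambda|=1\}\cup\{0\}$ and $\sigma_{uw}(T)=\{|\lambda|=1\}$, since $T-0$ is upper semi-Fredholm with $\alpha=1$, $\beta=2$. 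Hence $0\in\sigma_a(T)\setminus\sigma_{uw}(T)$ and $(UW_E)$ fails. On the other hand, for $0<|\lambda|<1$ the operator $T-\lambda I$ is bounded below and non-invertible, hence not consistently invertible, so $\sigma_{CI}(T)=\{0<|\lambda|<1\}$, $\overline{\sigma_{CI}(T)}\cap\sigma_a(T)=\sigma_a(T)$, and $\sigma_b(T)\cap\rho_a(T)$ supplies the rest of the closed disc: the displayed identity holds. So the ``if'' direction of the corollary is false as stated; no appeal to Kato's punctured-neighbourhood theorem or to $\lambda\notin\sigma_{vv}(T)$ can exclude a non-isolated point of $\sigma(T)$ that is an eigenvalue of $W_+$-type surrounded by bounded-below, non-surjective operators. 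Only the forward implication, and the converse under the additional a priori assumption that $T$ satisfies $(UW_E)$, survive.
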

The quasi-nilpotent part of $T \in B(\mathcal{H})$ is defined as the set
\begin{center}
	$H_0(T)=\lbrace h:lim_{n\rightarrow\infty}||T^nh||^{\frac{1}{n}}=0\rbrace$
\end{center}
Aponte et.al \cite{aponte} characterized property $(V_{\pi})$ in terms of quasi-nilpotent part of an operator. The following theorem is a characterization of property $(UW_E)$ using the quasi-nilpotent part of $T$.

\begin{theorem}
	Let $T \in B(\mathcal{H})$. Then the following statements are equivalent:
	\begin{enumerate}
		
	\item[(i)]  $T$ satisfies property $(UW_E)$.
	\item[(ii)] For each $\lambda \in \sigma_a(T) \setminus \sigma_{uw}(T)$, there exists $ v:=v(\lambda) \in \mathbb{N}$ such that
	$H_0(T-\lambda I)= \mathcal{N}(T-\lambda I)^v$
	and $\Pi(T)=E(T)\subseteq \rho_{SF_-}(T)\cup\rho_{SF_0}(T)$.
	\item[(iii)] $H_0(T-\lambda I)$ is closed for all $\lambda \in \sigma_a(T) \setminus \sigma_{uw}(T)$ and $\Pi(T)=E(T)\subseteq \rho_{SF}^-(T)\cup\rho_{SF}^0(T)$.
\end{enumerate}
\end{theorem}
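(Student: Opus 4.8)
The plan is to run the cycle (i)$\Rightarrow$(ii)$\Rightarrow$(iii)$\Rightarrow$(i) after two elementary reformulations. First, since $\sigma_{uw}(T)\subseteq\sigma_a(T)$ and $E(T)\subseteq\sigma_a(T)$ (an eigenvalue cannot be an approximate-point-regular value), property $(UW_E)$ is equivalent to the single set identity $\sigma_a(T)\setminus\sigma_{uw}(T)=E(T)$. Second, because $\rho_{SF}^-(T)\cup\rho_{SF}^0(T)=\lbrace\lambda:T-\lambda I\in SF_+(\mathcal{H})\rbrace=\mathbb{C}\setminus\sigma_{uw}(T)$, the clause $E(T)\subseteq\rho_{SF}^-(T)\cup\rho_{SF}^0(T)$ just says $E(T)\cap\sigma_{uw}(T)=\emptyset$. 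The local-spectral dictionary I would quote from \cite{aiena} is: closedness of $H_0(S)$ forces SVEP of $S$ at $0$; for $S\in\Phi_+(\mathcal{H})$, SVEP at $0$ is equivalent to $p(S)<\infty$; if $S\in\Phi_+(\mathcal{H})$ with $p(S)=v<\infty$ then $H_0(S)=\mathcal{N}(S^v)$ (the powers of a semi-Fredholm operator have closed range); and $\lambda$ is a pole of order $v$ exactly when $p(T-\lambda I)=q(T-\lambda I)=v<\infty$, in which case $H_0(T-\lambda I)=\mathcal{N}((T-\lambda I)^v)$ as well.

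For (i)$\Rightarrow$(ii) I would start from $\sigma_a(T)\setminus\sigma_{uw}(T)=E(T)$, which instantly yields $E(T)\subseteq\mathbb{C}\setminus\sigma_{uw}(T)=\rho_{SF}^-(T)\cup\rho_{SF}^0(T)$. Poles are always isolated eigenvalues, so $\Pi(T)\subseteq E(T)$; for the reverse take $\lambda\in E(T)=\sigma_a(T)\setminus\sigma_{uw}(T)$, so $T-\lambda I\in\Phi_+(\mathcal{H})$ while $\lambda$ is isolated in $\sigma(T)$. On the punctured disc where $T-\mu I$ is invertible the index is $0$, and local constancy of the index on the semi-Fredholm region forces $i(T-\lambda I)=0$; thus $T-\lambda I$ is Fredholm of index zero with finite ascent, hence Browder, hence $\lambda$ is a pole. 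This gives $\Pi(T)=E(T)$, and the pole facts give $H_0(T-\lambda I)=\mathcal{N}((T-\lambda I)^v)$ for each such $\lambda$. The step (ii)$\Rightarrow$(iii) is then immediate: $\mathcal{N}((T-\lambda I)^v)$ is closed, and the remaining clauses are identical once one notes $\rho_{SF_-}=\rho_{SF}^-$ and $\rho_{SF_0}=\rho_{SF}^0$.

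The content lies in (iii)$\Rightarrow$(i), where I must prove both inclusions of $\sigma_a(T)\setminus\sigma_{uw}(T)=E(T)$. The inclusion $\supseteq$ follows at once from $E(T)\subseteq\rho_{SF}^-(T)\cup\rho_{SF}^0(T)$ together with $E(T)\subseteq\sigma_a(T)$. For $\subseteq$, take $\lambda\in\sigma_a(T)\setminus\sigma_{uw}(T)$; then $T-\lambda I\in\Phi_+(\mathcal{H})$ with $i(T-\lambda I)\le0$, and since its range is closed while $\lambda\in\sigma_a(T)$ we get $\alpha(T-\lambda I)>0$. Closedness of $H_0(T-\lambda I)$ yields SVEP at $\lambda$ and hence $p(T-\lambda I)<\infty$. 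The aim is to upgrade this to Browderness so that $\lambda$ becomes a pole lying in $\Pi(T)=E(T)$; concretely I would use the finite ascent together with the hypothesis $\Pi(T)=E(T)$ and the punctured-neighbourhood structure of the semi-Fredholm region to force $i(T-\lambda I)=0$, after which $q(T-\lambda I)<\infty$ and $\lambda\in\Pi(T)=E(T)$.

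I expect this last index computation to be the sole genuine obstacle; the rest is bookkeeping with the equalities $\rho_{SF}^-(T)\cup\rho_{SF}^0(T)=\mathbb{C}\setminus\sigma_{uw}(T)$ and the $H_0$--SVEP--ascent dictionary. The delicate point is that an upper semi-Weyl operator with finite ascent need not be Browder unless its index vanishes: finite ascent alone only secures isolation of $\lambda$ in $\sigma_a(T)$, whereas membership in $E(T)$ demands isolation in the full spectrum $\sigma(T)$, equivalently $i(T-\lambda I)=0$. The technical lemma to nail down, therefore, is that under (iii) one can extract index zero for every $\lambda\in\sigma_a(T)\setminus\sigma_{uw}(T)$; the cleanest way to guarantee this is to reinforce the local hypothesis with the full pole description, namely $H_0(T-\lambda I)=\mathcal{N}((T-\lambda I)^v)$ together with the matching range identity for the analytic core, which directly delivers $p(T-\lambda I)=q(T-\lambda I)<\infty$ and hence the pole.
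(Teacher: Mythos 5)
Your handling of (i)$\Rightarrow$(ii)$\Rightarrow$(iii) is correct and essentially the paper's route: where the paper cites a lemma for $E(T)=\Pi(T)$ you run the punctured-neighbourhood index argument directly, and both of you reduce the containment clause to $E(T)\cap\sigma_{uw}(T)=\emptyset$. The problem is (iii)$\Rightarrow$(i), which your proposal never actually proves: you stop at ``upgrade finite ascent to Browderness'' and defer the index computation to an unspecified technical lemma. That deferral cannot be discharged from hypothesis (iii) as stated. Concretely, let $T=R\oplus 0$ on $l^2\oplus\mathbb{C}$, where $R$ is the unilateral right shift. Then $\sigma(T)=\{\lambda:|\lambda|\le 1\}$, $\sigma_a(T)=\{\lambda:|\lambda|=1\}\cup\{0\}$ and $\sigma_{uw}(T)=\{\lambda:|\lambda|=1\}$, so $\sigma_a(T)\setminus\sigma_{uw}(T)=\{0\}$; moreover $H_0(T)=\{0\}\oplus\mathbb{C}$ is closed and $E(T)=\Pi(T)=\emptyset$ (the spectrum has no isolated points), so (iii) holds, yet $0\notin E(T)$ and property $(UW_E)$ fails. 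Here $T$ is upper semi-Weyl with $p(T)=1$ but $q(T)=\infty$ and $i(T)=-1$: exactly the configuration you warn about, realized.

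So your diagnosis of the crux is right, but the attempt does not close it, and you should know that the paper's own proof does not close it either: it bridges the same step with the bare assertion ``Consequently, $q(T-\lambda I)<\infty$'', which is precisely the non sequitur you identify (an upper semi-Fredholm operator of nonpositive index with finite ascent need not have finite descent). Your suggested repair --- reinforcing the local hypothesis so that it forces $i(T-\lambda I)=0$ on $\sigma_a(T)\setminus\sigma_{uw}(T)$, e.g.\ via the full pole description $H_0(T-\lambda I)=\mathcal{N}(T-\lambda I)^v$ together with the matching analytic-core/range identity --- is the right kind of fix, and some such strengthening is genuinely needed for the equivalence to hold. As written, both your argument and the paper's have a gap at the descent step, and the example above shows the gap is not merely expository.
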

\begin{proof}
	\begin{enumerate}
	(i)$\Longrightarrow$(ii): Suppose $T \in (UW_E)$, Then by \cite[Lemma 3.2] {azn}, we have $E(T)=\Pi(T)$. If $\lambda \in \sigma_a(T)\setminus\sigma_{uw}(T)$, then $\lambda \in E(T)=\Pi(T) $. Thus $\lambda$ is a pole of the resolvent of $T$. Then it follows that there exists $ v:=v(\lambda) \in \mathbb{N}$ such that
	$H_0(T-\lambda I)= \mathcal{N}(T-\lambda I)^v$ by \cite[Theorem 2.47]{aiena}. Since $T\in(UW_E)$, it folloes that $E(T)\subseteq \rho_{SF_-}(T)\cup\rho_{SF_0}(T)$.\\
	(ii)$\Longrightarrow$(iii): Evident from the hypothesis.\\
	(iii)$\Longrightarrow$(i): Let $\lambda \in \sigma_a(T) \setminus \sigma_{uw}(T)$. Then $T-\lambda I$ is upper semi-Weyl. Since $H_0(T-\lambda I)$ is closed, $T$ has SVEP at $\lambda$. Then from  \cite[Remark 1.5]{apon}, we have $p(T-\lambda I)< \infty$.  Consequently, $q(T-\lambda I)<\infty$. Thus $\lambda \in \sigma(T) \setminus \sigma_{d}(T)=\Pi(T)=E(T)$. Thus $ \sigma_a(T) \setminus \sigma_{uw}(T)\subseteq E(T)$. Now $E(T)\subseteq [\rho_{SF_-}(T)\cup\rho_{SF_0}(T)] \cap \sigma_{p}(T)=\sigma_a(T) \setminus \sigma_{uw}(T)$. Hence $T$ satisfies property $(UW_E)$.
	\end{enumerate}
\end{proof}

 If $T\in (UW_E)$ and $\sigma(T) \neq \sigma_a(T)$, then there exist a $\lambda \in \rho_{SF}^-(T)$ such that $H_0(T-\lambda I)$ is closed and consequently $T$ has SVEP at $\lambda$. In this case property $(UW_E)$ is same as property $(W_E)$ and $(V_E)$.
 If $T$ satisfies property $(UW_E)$ if and if $T$ satisfies property $(V_{\pi})$ and $E(T)=\pi(T)$. Moreover,
 If $T$ is hyponormal, then $\sigma(T)\setminus\sigma_a(T)=\sigma_{CI}(T)$ \cite{cao}. So if $\sigma_{CI}(T)=\emptyset$, then property $(V_E)$ is equivalent to property $(UW_E)$. Sanabria et.al in \cite{sana} showed that property $(V_E)$ implies property $(UW_E)$ but the converse need not true. For example, consider the right shift operator  $R \in B(l^2)$ given by $R(x_1,x_2,x_3,...)=(0,x_1,x_2,x_3,...)$, then $\sigma(R)=\lbrace \lambda \in \mathbb{C}: |\lambda | \leq 1\rbrace$ and  $\sigma_a(R)= \sigma_{uw}(R), E(R)=\emptyset$. If an operator satisfies property $(UW_E)$, then it need not necessary that its adjoint satisfies property $(UW_E)$. As an example, consider the quasi-nilpotent operator $S \in B(l^2)$ defined by $S(x_1,x_2,x_3,...)=(0,\frac{x_1}{2},\frac{x_2}{3},....)$, then its adjoint $S^*(x_1,x_2,x_3,...)=(\frac{x_2}{3},\frac{x_3}{4},....)$. Since $\sigma_a(S)=\{0\}, \sigma_{uw}(S)=\{0\}$, $E(S)=\emptyset$, $S$ satisfies property $(UW_E)$. But $\sigma_a(S^*)=\{0\}, \sigma_{uw}(S^*)=\{0\}$, $E(S)=\{0\}$. So $S^*$ does not satisfies property $(UW_E)$. 
\section{Property $(UW_E)$ for direct sum and tensor product}
Let $T\in B(\mathcal{H})$ and $S\in B(\mathcal{K})$, then the direct sum of $T$ and $S$ is   given as $(T\oplus S)(x \oplus y)=Tx \oplus Sy \in \mathcal{H}\oplus\mathcal{K}$. The stability of variants of Weyl-type theorems under tensor sum has been studied by many authors \cite{berka,rashid,sanabri}. Now we study stability of  property $(UW_E)$ for direct sums. In general, Property $(UW_E)$ is not transferred to the direct sum if one of the operator satisfies property $(UW_E)$, as shown by the following examples.
\begin{example}
	$Q \in B(l^2)$ be a nilpotent operator and $T\in B(l^2)$ is given by
	$T(x_1,x_2,x_3,...)=(0,\frac{x_1}{2},\frac{x_2}{3},....)$. Then,
	$\sigma_a(Q)=\{0\}, \sigma_{uw}(Q)=\{0\}, E(Q)=\{0\}, 
	\sigma_a(T)=\{0\}, \sigma_{uw}(T)=\{0\}$, $E(T)=\emptyset, 
	\sigma_a(T\oplus Q)=\{0\}, \sigma_{uw}(T \oplus Q)=\{0\}$ and $E(T\oplus Q)=\{0\}$.
	Therefore, $Q \oplus T$ does not satisfies property $(UW_E)$.
\end{example}
\begin{example}
		Let $V$ be the Volterra operator defined on $L^2[0,1]$ given by 
	\begin{center}
		$V(f)(t)= \int_{t}^{1} f(x) \,dx , f \in L^2[0,1]$.
	\end{center} 
	We can see that $\sigma_{a}(V)=\sigma_{uw}(V)=\{0\}$, $E(V)= \emptyset$ and consider the operator $T \in B(l^2)$ defined by $T(x_1,x_2,x_3,...)=(0,x_1,0,0,...)$. Then $\sigma_{a}(T)=\sigma_{uw}(T)=\{0\}= E(T)$. So, $V$ satisfies property $(UW_E)$ whereas $T$ does not satisfies property $(UW_E)$. Now $\sigma_{a}(T \oplus V)=\sigma_{uw}(T \oplus V)=\{0\}=E(T\oplus V)$, $E_0(T\oplus V)=\emptyset$. Here $T\oplus V$ not satisfies property $(UW_E)$. 
\end{example}
The example below shows that even if operators $T$ and $S$ satisfy property $(UW_E)$, $T \oplus S$ does not satisfy that the property $(UW_E)$.
\begin{example}
Let $T,S \in B(l^2)$ is defined as $T(x_1,x_2,x_3,...)=(0,x_1,x_2,x_3,...)$ and $S(x_1,x_2,...)=(x_2,x_3,x_4,...)$. We have $\sigma_{a}(T)=\lbrace \lambda \in \mathbb{C}: |\lambda |=1\rbrace=\sigma_{uw}(T)$ and $E(T)= \emptyset$. Then $T$ satisfies property $(UW_E)$. Since $\sigma(S)=\sigma_{a}(S)=\sigma_{uw}(S)=\lbrace \lambda \in \mathbb{C}: |\lambda | \leq 1\rbrace$, $E(S)=\emptyset$. Hence $S$  satisfies property $(UW_E)$. Now  $\sigma_{a}(T \oplus S)= \{0\} \cup \lbrace \lambda \in \mathbb{C}: |\lambda | = 1\rbrace$, $\sigma_{uw}(T \oplus S)=\lbrace \lambda \in \mathbb{C}: |\lambda | = 1\rbrace$ and $E(T \oplus S)=\emptyset$. Thus $T \oplus S$ does not satisfies property $(UW_E)$.
	\end{example}

%
Let $\sigma_1(T)= \sigma_a(T) \setminus \sigma_{uw}(T)$ and $\sigma_2(T)= \sigma(T) \setminus \sigma_a(T)$. The following theorem shows that under certain conditions $T \oplus S$ satisfies property $(UW_E)$ for operators $T$ and $S$.

\begin{theorem}\label{T1}
	Suppose $T  \in B(\mathcal{H})$ has no isolated point in the spectrum and $S \in B(K)$ satisfies property $(UW_E)$. If $\sigma_{uw}(T\oplus S)= \sigma_a(T) \cup \sigma_{uw}(S)$ and $\sigma_2(T)=\emptyset$, then property $(UW_E)$ holds for $T\oplus S$.
\end{theorem}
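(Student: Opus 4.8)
The plan is to verify the defining equality $\sigma_a(T\oplus S)\setminus E(T\oplus S)=\sigma_{uw}(T\oplus S)$ directly, after first converting the two hypotheses into pointwise-usable statements. Since $T$ has no isolated point in its spectrum, $\text{iso}\,\sigma(T)=\emptyset$ and hence $E(T)=\emptyset$; and $\sigma_2(T)=\emptyset$ is exactly the statement $\sigma(T)=\sigma_a(T)$. I would record the standard direct-sum identities $\sigma(T\oplus S)=\sigma(T)\cup\sigma(S)$ and $\sigma_a(T\oplus S)=\sigma_a(T)\cup\sigma_a(S)$, together with $\alpha((T\oplus S)-\lambda I)=\alpha(T-\lambda I)+\alpha(S-\lambda I)$, $\mathcal{R}((T\oplus S)-\lambda I)=\mathcal{R}(T-\lambda I)\oplus\mathcal{R}(S-\lambda I)$ and $i((T\oplus S)-\lambda I)=i(T-\lambda I)+i(S-\lambda I)$; these let me reduce everything about $T\oplus S$ to the two factors.

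The crucial step, and the one I expect to be the main obstacle, is to pin down $\text{iso}\,\sigma(T\oplus S)$ and thereby $E(T\oplus S)$. If $\lambda$ were an isolated point of $\sigma(T\oplus S)=\sigma(T)\cup\sigma(S)$ lying in $\sigma(T)$, then a punctured neighbourhood of $\lambda$ would miss $\sigma(T)\cup\sigma(S)$, and in particular miss $\sigma(T)$, making $\lambda$ an isolated point of $\sigma(T)$ and contradicting the hypothesis on $T$. Hence every isolated point of $\sigma(T\oplus S)$ lies in $\rho(T)$, and conversely any $\lambda\in\text{iso}\,\sigma(S)\cap\rho(T)$ is isolated in the union; so $\text{iso}\,\sigma(T\oplus S)=\text{iso}\,\sigma(S)\setminus\sigma(T)$. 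For such $\lambda$ the operator $T-\lambda I$ is invertible, so $\alpha(T-\lambda I)=0$ and $\alpha((T\oplus S)-\lambda I)=\alpha(S-\lambda I)$. Therefore $E(T\oplus S)=\{\lambda\in\text{iso}\,\sigma(S)\setminus\sigma(T):\alpha(S-\lambda I)>0\}=E(S)\cap\rho(T)$.

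It then remains pure set algebra. Using $\sigma_a(T)=\sigma(T)$ and the fact that $E(S)\cap\rho(T)$ is disjoint from $\sigma(T)$, I would compute
$$
\sigma_a(T\oplus S)\setminus E(T\oplus S)=[\sigma(T)\cup\sigma_a(S)]\setminus[E(S)\cap\rho(T)]=\sigma(T)\cup[\sigma_a(S)\setminus E(S)],
$$
the last equality holding because deleting $E(S)\cap\rho(T)$ from $\sigma_a(S)$ only removes points outside $\sigma(T)$, while the points of $\sigma_a(S)\cap\sigma(T)$ are absorbed into $\sigma(T)$. Since $S$ satisfies property $(UW_E)$, we have $\sigma_a(S)\setminus E(S)=\sigma_{uw}(S)$, whence the left-hand side equals $\sigma(T)\cup\sigma_{uw}(S)=\sigma_a(T)\cup\sigma_{uw}(S)$. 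By the hypothesis $\sigma_{uw}(T\oplus S)=\sigma_a(T)\cup\sigma_{uw}(S)$ this is exactly $\sigma_{uw}(T\oplus S)$, which establishes property $(UW_E)$ for $T\oplus S$. The only delicate points are the isolated-point argument above and the disjointness bookkeeping in the set-difference; both become routine once the reduction formulas are in place.
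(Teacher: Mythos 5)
Your proposal is correct and follows essentially the same route as the paper: both arguments hinge on the identities $\sigma_a(T\oplus S)=\sigma_a(T)\cup\sigma_a(S)$ and $\mathrm{iso}\,\sigma(T\oplus S)=\mathrm{iso}\,\sigma(S)\cap\rho(T)$ (the latter from $\mathrm{iso}\,\sigma(T)=\emptyset$), leading to $E(T\oplus S)=E(S)\cap\rho(T)=E(S)\cap\rho_a(T)$ via $\sigma_2(T)=\emptyset$, followed by set algebra using property $(UW_E)$ for $S$ and the hypothesis on $\sigma_{uw}(T\oplus S)$. The only cosmetic difference is that you verify $\sigma_a(T\oplus S)\setminus E(T\oplus S)=\sigma_{uw}(T\oplus S)$ directly while the paper checks the equivalent identity $\sigma_a(T\oplus S)\setminus\sigma_{uw}(T\oplus S)=E(T\oplus S)$.
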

\begin{proof}
	As 	$\sigma_a(T\oplus S)=\sigma_a(T) \cup \sigma_a(S)$	for any pair of operators $T $ and $S$, we have 
	\begin{equation*}
		\begin{split}
			\sigma_a(T\oplus S)\setminus \sigma_{uw}(T\oplus S) & =[\sigma_a(T) \cup \sigma_a(S)] \setminus[\sigma_a(T) \cup \sigma_{uw}(S)]\\
			& = \sigma_a(S) \setminus [\sigma_a(T)\setminus\sigma_{uw}(S)]\\
			&= [\sigma_a(S) \setminus \sigma_{uw}(S)]\setminus\sigma_a(T)\\
			& = E(S) \cap \rho_a(T).
		\end{split}
	\end{equation*}
	
	Since	$\text{iso}~ \sigma(T\oplus S)= \rm{iso}~\sigma(S) \cap \rho(T)$,
	\begin{equation*}
		\begin{split}
			E(T \oplus S)&= \text{iso}~  \sigma(T \oplus S) \cap \sigma_p(T \oplus S)\\
			& = [\text{iso}~ \sigma(S) \cap \rho(T)] \cap [\sigma_p(T)\cup \sigma_p(S)]\\
			& = [\text{iso}~ \sigma(S) \cap \rho(T) \cap \sigma_p(T)] \cup [\text{iso}~ \sigma(S) \cap \rho(T) \cap \sigma_p(S)]\\
			& = E(S)\cap \rho(T)\\
			&= E(S)\cap \rho_a(T).
		\end{split}
	\end{equation*}
	Thus, 	$\sigma_a(T \oplus S)\setminus\sigma_{uw}(T \oplus S)= E(T \oplus S)$. Hence $T \oplus S $ satisfies property $(UW_E)$.
	\end{proof}
%

\begin{corollary}
	Suppose $T \in B(\mathcal{H})$ is such that $\text{iso}~ \sigma
	(T)= \emptyset$ and  $S \in B(\mathcal{K})$ satisfies property $(UW_E)$ with $E(S)=\sigma_1(T \oplus S)=\sigma_2(T)= \emptyset$. Then $T\oplus S$ satisfies property $(UW_E)$.   
\end{corollary}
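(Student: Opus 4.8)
The plan is to realize this corollary as a special instance of Theorem \ref{T1}. Three of the hypotheses of Theorem \ref{T1} are already granted: $T$ has empty isolated spectrum ($\text{iso}~\sigma(T) = \emptyset$), $S$ satisfies property $(UW_E)$, and $\sigma_2(T) = \emptyset$. Thus the only thing left to check, in order to invoke Theorem \ref{T1} verbatim, is the spectral identity $\sigma_{uw}(T\oplus S) = \sigma_a(T) \cup \sigma_{uw}(S)$. Establishing this identity from the two remaining hypotheses $E(S) = \emptyset$ and $\sigma_1(T\oplus S) = \emptyset$ is the heart of the argument.

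First I would record the two standard facts that drive everything: the approximate point spectrum of a direct sum splits, $\sigma_a(T\oplus S) = \sigma_a(T) \cup \sigma_a(S)$ (already used in the proof of Theorem \ref{T1}), and the inclusion $\sigma_{uw}(\cdot) \subseteq \sigma_a(\cdot)$, which holds for every operator. The hypothesis $\sigma_1(T\oplus S) = \sigma_a(T\oplus S) \setminus \sigma_{uw}(T\oplus S) = \emptyset$ then forces $\sigma_{uw}(T\oplus S) = \sigma_a(T\oplus S) = \sigma_a(T) \cup \sigma_a(S)$.

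Next I would feed in the information about $S$. Because $S$ satisfies property $(UW_E)$, by definition $\sigma_a(S) \setminus E(S) = \sigma_{uw}(S)$; combined with $E(S) = \emptyset$ this collapses to $\sigma_a(S) = \sigma_{uw}(S)$. Substituting this into the displayed equality gives exactly $\sigma_{uw}(T\oplus S) = \sigma_a(T) \cup \sigma_{uw}(S)$, which is the missing hypothesis. With all four hypotheses of Theorem \ref{T1} now in hand, the conclusion that $T\oplus S$ satisfies property $(UW_E)$ follows immediately.

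The argument is essentially bookkeeping, so I do not expect a serious obstacle; the one point that requires care is the translation of property $(UW_E)$ for $S$ into the equality $\sigma_a(S) = \sigma_{uw}(S)$, which is precisely where the hypothesis $E(S) = \emptyset$ enters in an essential way. As an alternative to quoting Theorem \ref{T1}, one could argue directly: the left-hand side $\sigma_a(T\oplus S)\setminus\sigma_{uw}(T\oplus S)$ is empty by the hypothesis $\sigma_1(T\oplus S)=\emptyset$, while the isolated-point computation from the proof of Theorem \ref{T1} yields $E(T\oplus S) = E(S)\cap\rho(T) = \emptyset$; the equality of these two empty sets is exactly the assertion of property $(UW_E)$ for $T \oplus S$.
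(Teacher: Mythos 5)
Your proposal is correct and follows essentially the same route as the paper: both derive $\sigma_a(S)=\sigma_{uw}(S)$ from $S\in(UW_E)$ together with $E(S)=\emptyset$, use $\sigma_1(T\oplus S)=\emptyset$ to conclude $\sigma_{uw}(T\oplus S)=\sigma_a(T\oplus S)=\sigma_a(T)\cup\sigma_{uw}(S)$, and then invoke Theorem \ref{T1}. Your write-up is in fact slightly more explicit than the paper's about where the inclusion $\sigma_{uw}\subseteq\sigma_a$ is used, but the argument is the same.
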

\begin{proof}
	Since $S\in(UW_E)$ and $E(S)=\emptyset$, $\sigma_a(S)=\sigma_{uw}(S)$. Since $\sigma_1(T \oplus S)=\emptyset$, it follows that $\sigma_a(T \oplus S)=\sigma_{uw}(T \oplus S)$. Also $\sigma_{uw}(T \oplus S)=\sigma_a(T)\cup\sigma_{uw}(S)=\sigma(T)\cup\sigma_{uw}(S)$. Then from Theorem \ref{T1}, $T\oplus S$ satisfies property $(UW_E)$.
	\end{proof}
\begin{corollary}
Let $T \in B(\mathcal{H})$ and $S \in B(\mathcal{K})$ such that $\text{iso}~ \sigma
(T)\cup\sigma_1(T)= \emptyset$ and  $S$ satisfies property $(UW_E)$. If $\sigma_{uw}(T\oplus S)=\sigma_{uw}(T)\cup\sigma_{uw}(S)$, then $T\oplus S$ satisfies property $(UW_E)$.
\end{corollary}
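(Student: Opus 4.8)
The plan is to deduce this from Theorem~\ref{T1}, whose conclusion is exactly property $(UW_E)$ for $T\oplus S$. First I would unpack the hypothesis $\mathrm{iso}~\sigma(T)\cup\sigma_1(T)=\emptyset$: it yields simultaneously $\mathrm{iso}~\sigma(T)=\emptyset$, so $T$ has no isolated point in its spectrum, and $\sigma_1(T)=\sigma_a(T)\setminus\sigma_{uw}(T)=\emptyset$. Since $\sigma_{uw}(T)\subseteq\sigma_a(T)$ always holds, the latter upgrades to the equality $\sigma_a(T)=\sigma_{uw}(T)$. Substituting this into the standing assumption $\sigma_{uw}(T\oplus S)=\sigma_{uw}(T)\cup\sigma_{uw}(S)$ gives $\sigma_{uw}(T\oplus S)=\sigma_a(T)\cup\sigma_{uw}(S)$, which is precisely the spectral identity required by Theorem~\ref{T1}. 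Thus two of the three hypotheses of that theorem are already in hand.

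To finish I would rerun the computation from the proof of Theorem~\ref{T1}. Using $\sigma_a(T\oplus S)=\sigma_a(T)\cup\sigma_a(S)$, the identity $\sigma_{uw}(T\oplus S)=\sigma_a(T)\cup\sigma_{uw}(S)$, and property $(UW_E)$ for $S$, the approximate-point part collapses to $\sigma_a(T\oplus S)\setminus\sigma_{uw}(T\oplus S)=E(S)\cap\rho_a(T)$. On the other side, since $\mathrm{iso}~\sigma(T)=\emptyset$ one has $\mathrm{iso}~\sigma(T\oplus S)=\mathrm{iso}~\sigma(S)\cap\rho(T)$; intersecting with $\sigma_p(T\oplus S)=\sigma_p(T)\cup\sigma_p(S)$ and discarding the term $\mathrm{iso}~\sigma(S)\cap\rho(T)\cap\sigma_p(T)$, which is empty because $\rho(T)\cap\sigma_p(T)=\emptyset$, gives $E(T\oplus S)=E(S)\cap\rho(T)$. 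It then remains only to identify these two sets, that is, to prove $E(S)\cap\rho_a(T)=E(S)\cap\rho(T)$.

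The main obstacle is exactly this last identification. The two sets differ by $E(S)\cap\bigl(\rho_a(T)\setminus\rho(T)\bigr)=E(S)\cap\sigma_2(T)$, so the argument closes the instant $\sigma_2(T)=\emptyset$, the third hypothesis of Theorem~\ref{T1}, is available. I would therefore concentrate on securing $\sigma_2(T)=\emptyset$, or at least the weaker disjointness $E(S)\cap\sigma_2(T)=\emptyset$. This is the delicate point, because $\sigma_2(T)=\emptyset$ is not forced by $\mathrm{iso}~\sigma(T)=\emptyset$ together with $\sigma_a(T)=\sigma_{uw}(T)$: for the unilateral right shift $R$ one has $\mathrm{iso}~\sigma(R)=\emptyset$ and $\sigma_a(R)=\sigma_{uw}(R)=\partial\mathbb{D}$, yet $\sigma_2(R)$ is the full open disc. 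I would try to extract the required disjointness from the interplay of property $(UW_E)$ for $S$ with the index bookkeeping encoded in $\sigma_{uw}(T\oplus S)=\sigma_{uw}(T)\cup\sigma_{uw}(S)$; if that fails, the cleanest remedy is to impose $\sigma_2(T)=\emptyset$ outright, whereupon Theorem~\ref{T1} applies verbatim.
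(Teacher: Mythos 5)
Your derivation is exactly the intended one --- the paper states this corollary without any proof, as a consequence of Theorem~\ref{T1} --- and your diagnosis of the obstruction is correct and important. From $\sigma_1(T)=\emptyset$ you rightly get $\sigma_a(T)=\sigma_{uw}(T)$ and hence $\sigma_{uw}(T\oplus S)=\sigma_a(T)\cup\sigma_{uw}(S)$, but Theorem~\ref{T1} also requires $\sigma_2(T)=\emptyset$, which is nowhere supplied here, and its proof genuinely uses it at the final identification $E(S)\cap\rho_a(T)=E(S)\cap\rho(T)$. The disjointness $E(S)\cap\sigma_2(T)=\emptyset$ cannot be extracted from the remaining hypotheses: the corollary as printed is false. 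Take $T=R$, the unilateral right shift on $l^2$, so that $\text{iso}~\sigma(T)=\emptyset$, $\sigma_a(T)=\sigma_{uw}(T)=\lbrace \lambda : |\lambda|=1\rbrace$ (hence $\sigma_1(T)=\emptyset$) while $\sigma_2(T)=\lbrace \lambda : |\lambda|<1\rbrace$. Take $S=0\oplus(2I+R)$ on $\mathbb{C}\oplus l^2$; then $\sigma(S)=\lbrace 0\rbrace\cup\lbrace \lambda : |\lambda-2|\leq 1\rbrace$, $S$ satisfies property $(UW_E)$ with $E(S)=\sigma_a(S)\setminus\sigma_{uw}(S)=\lbrace 0\rbrace$, and a direct index computation gives $\sigma_{uw}(T\oplus S)=\lbrace |\lambda|=1\rbrace\cup\lbrace |\lambda-2|=1\rbrace=\sigma_{uw}(T)\cup\sigma_{uw}(S)$, so every stated hypothesis holds. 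Yet $\sigma_a(T\oplus S)\setminus\sigma_{uw}(T\oplus S)=\lbrace 0\rbrace$ while $E(T\oplus S)=\emptyset$, because $0$ lies in the interior of $\sigma(T)\subseteq\sigma(T\oplus S)$ and so is not isolated. Thus $T\oplus S$ fails property $(UW_E)$.

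So there is no error of reasoning in your proposal; the step you could not close is a gap in the statement itself, and your fallback is the correct repair: add $\sigma_2(T)=\emptyset$ (or at least $E(S)\cap\sigma_2(T)=\emptyset$) to the hypotheses, after which your argument, or a verbatim appeal to Theorem~\ref{T1}, completes the proof.
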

Tensor product of operators $T \in B(\mathcal{H} )$ and $S \in B(\mathcal{K})$ 
is defined by
\begin{eqnarray*}
	(T \otimes S)\ \Sigma_{i}(x_i \otimes y_i)&=& \Sigma_{i}Tx_i  \otimes Sy_i
\end{eqnarray*}  for each $\Sigma_{i}x_i \otimes y_i \in \mathcal{H} \otimes \mathcal{K}$. Stability of Weyl type theorems and its variations under tensor product has been studied by many authors, for more details see \cite{aponte,dug}.
 The following example shows that if one of the operators satisfies property $(UW_E)$, then $T \otimes S$ may not satisfies property $(UW_E)$.
\begin{example}
	Let $Q \in B(l^2)$ defined by $$Q(x_1,x_2,x_3,...)=(0,\frac{x_1}{2},\frac{x_2}{3},0,0,....),$$ a nilpotent operator  and $T$ be an injective quasi-nilpotent operator. We have $\sigma_{a}(Q)=\sigma_{uw}(Q)=E(Q)=\{0\}$ and $\sigma_{a}(T)=\sigma_{uw}(T)=\{0\}$, $E(T)=\emptyset$. Thus $T\in(UW_E)$, $Q\notin(UW_E)$. Also $\sigma_{a}(T \otimes Q)=\sigma_{a}(T)\sigma_{a}(Q)=\{0\}$. We know that $0\notin \sigma_{a}(T \otimes Q) \setminus \sigma_{uw}(T \otimes Q)$, $0 \in E(T \otimes Q)$. It follows that  $T \otimes Q$ does not satisfies property $(UW_E)$.
\end{example}
The following example shows that even if $T$ and $S$ satisfies property $(UW_E)$, tensor product $T \otimes S$ may not satisfies property $(UW_E)$.
\begin{example}
	Let $Q \in B(l^2)$ be an injective quasi-nilpotent operator and let
	\begin{center} $T=S=(I+Q) \oplus \alpha \oplus \beta$ on $B(l^2) \oplus \mathbb{C} \oplus \mathbb{C},$
			\end{center} where $\alpha \beta=1 \neq \alpha$. This example has been studied in \cite[Example 2]{dug} to show that property $(w)$ does not transfer from $T$ and $S$ to $T \otimes S$. We can see that $ \sigma_a(T)= \sigma_a(S)= \{1,\alpha,\beta\}, \sigma_{uw}(T)= \sigma_{uw}(S)= \{1\}$, $E(T)=\{\alpha,\beta\}=E(S)$, $\sigma_a(T \otimes S)= \{1,\alpha,\beta,\beta^{2},\alpha^{2}\}$, $\sigma_{uw}(T \otimes S)=\{1,\alpha,\beta\}$. Thus $T$ and $S$ satisfies property $(UW_E)$,
	$1 \notin \sigma_{a}(T \otimes S) \setminus \sigma_{uw}(T \otimes S)$ but
	$1 \in E(T \otimes S) $. So, $T \otimes S$ does not satisfies property $(UW_E)$.		
\end{example}
The following theorem shows that for isoloid operators $T$ and $S$ under certain conditions $T \otimes S$ satisfies property $(UW_E)$. This was studied for property $(V_E)$ in \cite{aponte} and for property $(w)$ in \cite{dug}.
\begin{theorem}
	\begin{enumerate}
\item[(i)] 	If $T$ and $S$ satisfies property $(UW_E)$, then $E(T \otimes S) \subseteq \sigma_a(T \otimes S) \setminus \sigma_{uw}(T \otimes S)$.
\item[(ii)] If $T$ and $S$ are isoloid operators and $0 \notin \sigma_p(T \otimes S)$, then $T \otimes S$ satisfies property $(UW_E)$ whenever $\sigma_{uw}(T \otimes S)=\sigma_a(T)\sigma_{uw}(S) \cup \sigma_a(S)\sigma_{uw}(T)$.
\end{enumerate}
\end{theorem}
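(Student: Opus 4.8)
The plan is to run both parts off the multiplicativity of the spectra under tensor products, namely $\sigma(T\otimes S)=\sigma(T)\sigma(S)$ and $\sigma_a(T\otimes S)=\sigma_a(T)\sigma_a(S)$, together with the standard eigenvalue factorization: for $\mu\neq0$ one has $\mu\in\sigma_p(T\otimes S)$ if and only if $\mu=\lambda\nu$ with $\lambda\in\sigma_p(T)$ and $\nu\in\sigma_p(S)$. I will take these as known. The mechanism used repeatedly is that property $(UW_E)$ for $T$ forces $E(T)=\Pi(T)$ (as recorded via \cite[Lemma 3.2]{azn} in the earlier proof), so every $\lambda\in E(T)$ is a pole at which $T-\lambda I$ is upper semi-Weyl, hence Browder with a \emph{finite-dimensional} Riesz subspace $H(\lambda;T)$; the same holds for $S$.

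For (i): let $\mu\in E(T\otimes S)$. Being an eigenvalue, $\mu\in\sigma_a(T\otimes S)$, so the entire content is to show $\mu\notin\sigma_{uw}(T\otimes S)$. After disposing of $\mu=0$ by hand, I factor $\mu=\lambda_0\nu_0$ with $\lambda_0\in\sigma_p(T)$, $\nu_0\in\sigma_p(S)$; since $\mu$ is isolated in $\sigma(T)\sigma(S)$, a short limiting argument forces each factor to be isolated in its own spectrum, so $\lambda_0\in E(T)$ and $\nu_0\in E(S)$. Property $(UW_E)$ then makes $T-\lambda_0 I$ and $S-\nu_0 I$ Browder, with finite-dimensional spectral subspaces $\mathcal H_1=H(\lambda_0;T)$ and $\mathcal K_1=H(\nu_0;S)$. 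Splitting $\mathcal H\otimes\mathcal K$ into the four pieces $\mathcal H_i\otimes\mathcal K_j$ and exploiting the identity
\[ T\otimes S-\lambda_0\nu_0 I=(T-\lambda_0 I)\otimes S+\lambda_0 I\otimes(S-\nu_0 I), \]
I aim to show that on the summands meeting the invertible parts $T|_{\mathcal H_2}-\lambda_0 I$ and $S|_{\mathcal K_2}-\nu_0 I$ the operator is invertible or Fredholm, while on the finite-dimensional $\mathcal H_1\otimes\mathcal K_1$ it is nilpotent-type; assembling these should yield $\alpha(T\otimes S-\mu)<\infty$, closed range and index $\le 0$.

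This Fredholm transfer is where I expect the real difficulty. Since $T\otimes S-\lambda_0\nu_0 I$ does \emph{not} factor as $(T-\lambda_0 I)\otimes(S-\nu_0 I)$, the upper semi-Weyl property at $\mu$ is not inherited term by term, and the genuine obstacle is to control \emph{all} factorizations $\mu=\lambda\nu$ at once, not merely the eigenvalue one. In particular, one must rule out a summand coming from a factor pair in which $\lambda$ or $\nu$ is an isolated non-eigenvalue carrying an infinite-dimensional quasinilpotent spectral subspace (the Volterra-type situation allowed under $(UW_E)$), since such a piece can destroy closedness of the range. The finiteness of the set of admissible factorizations, which follows from isolatedness of $\mu$ via Bolzano--Weierstrass, is the tool I expect to lean on most delicately here.

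For (ii): the inclusion $E(T\otimes S)\subseteq\sigma_a(T\otimes S)\setminus\sigma_{uw}(T\otimes S)$ is supplied by (i). For the reverse, take $\mu\in\sigma_a(T\otimes S)\setminus\sigma_{uw}(T\otimes S)$. First I note $\mu\neq0$: were $\mu=0$, then $0\in\sigma_a(T\otimes S)$ with $T\otimes S$ upper semi-Weyl forces closed range and $\alpha(T\otimes S)>0$, contradicting $0\notin\sigma_p(T\otimes S)$. Writing $\mu=\lambda_0\nu_0\in\sigma_a(T)\sigma_a(S)$, the hypothesis $\sigma_{uw}(T\otimes S)=\sigma_a(T)\sigma_{uw}(S)\cup\sigma_a(S)\sigma_{uw}(T)$ forces $\lambda_0\notin\sigma_{uw}(T)$ and $\nu_0\notin\sigma_{uw}(S)$, since otherwise $\mu$ would fall into one of the two products. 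Hence $\lambda_0\in\sigma_a(T)\setminus\sigma_{uw}(T)=E(T)$ and similarly $\nu_0\in E(S)$, so both factors are isolated with positive finite nullity; a Bolzano--Weierstrass argument applied to any sequence of factorizations $a_kb_k\to\mu$ then shows $\mu$ is isolated in $\sigma(T)\sigma(S)$, while the elementary tensor of eigenvectors gives $\alpha(T\otimes S-\mu)>0$, so $\mu\in E(T\otimes S)$. Here the roles of the standing hypotheses are transparent: $0\notin\sigma_p(T\otimes S)$ removes the troublesome point $\mu=0$, the isoloid assumption together with the $\sigma_{uw}$-identity guarantees that the isolated factor-eigenvalues assemble into an isolated point of the product spectrum, and I expect this isolatedness bookkeeping, rather than any Fredholm estimate, to be the only subtle point of (ii).
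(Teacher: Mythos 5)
Your part (i) is not yet a proof: the decisive claim that $T\otimes S-\mu I$ is upper semi-Weyl is only announced (``I aim to show \dots''), and you yourself identify the obstruction --- a factorization $\mu=\lambda\nu$ in which one factor is an isolated point of the spectrum carrying an infinite-dimensional quasinilpotent spectral subspace, which your four-block decomposition of $\mathcal H\otimes\mathcal K$ cannot exclude and which can destroy closedness of the range --- without resolving it. The paper does not attempt this Fredholm transfer at all. Its proof of (i) is pure set bookkeeping: it starts from the known inclusion $E(T\otimes S)\subseteq E(T)\cdot E(S)$, uses property $(UW_E)$ to rewrite $E(T)=\sigma_a(T)\setminus\sigma_{uw}(T)$ and $E(S)=\sigma_a(S)\setminus\sigma_{uw}(S)$, and then invokes the tensor-product relation $\sigma_{uw}(T\otimes S)\subseteq\sigma_a(T)\sigma_{uw}(S)\cup\sigma_{uw}(T)\sigma_a(S)$ (the standard inclusion from the Duggal-type machinery in \cite{dug}; no hypothesis on $\sigma_{uw}(T\otimes S)$ is needed for the direction used here) to conclude $\mu\notin\sigma_{uw}(T\otimes S)$. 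That inclusion is precisely the device that handles ``all factorizations at once,'' the point on which your direct approach stalls. If you want to keep your route, you must either prove that inclusion yourself (which is essentially the missing Fredholm transfer) or cite it; as written, (i) has a genuine gap.

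Your part (ii) is essentially the paper's argument --- factor $\mu=\lambda_0\nu_0$, use the hypothesis $\sigma_{uw}(T\otimes S)=\sigma_a(T)\sigma_{uw}(S)\cup\sigma_a(S)\sigma_{uw}(T)$ to force $\lambda_0\in\sigma_a(T)\setminus\sigma_{uw}(T)$ and $\nu_0\in\sigma_a(S)\setminus\sigma_{uw}(S)$, then upgrade to an isolated eigenvalue of $T\otimes S$ --- and your treatment of the isolatedness bookkeeping and of the role of $0\notin\sigma_p(T\otimes S)$ is in fact more explicit than the paper's. One caveat: your identification $\sigma_a(T)\setminus\sigma_{uw}(T)=E(T)$ (and likewise for $S$) uses property $(UW_E)$ for $T$ and $S$, which is not literally among the hypotheses of (ii) as stated; the paper clearly intends it to carry over from (i) (see the remark following the theorem), so you should state that you are assuming it rather than deriving it from isoloidness alone.
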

\begin{proof}
	\begin{enumerate}
	\item[(i)] If $\mu \in E(T \otimes S) \subseteq E(T).E(S)$, then there exists $\lambda$ and $\nu$ in $E(T)$ and $E(S)$ such that $\mu= \lambda \nu$. Then, $\lambda \in \sigma_a(T) \setminus \sigma_{uw}(T)$ and $\nu \in \sigma_a(S) \sigma_{uw}(S)$. Hence $\mu \in \sigma_a(T \otimes S)$. Since $\sigma_{uw}(T \otimes S)=\sigma_a(T)\sigma_{uw}(S) \cup \sigma_a(S)\sigma_{uw}(T)$, $\mu \notin \sigma_{uw}(T \otimes S)$. Therefore,  $E(T \otimes S) \subseteq \sigma_a(T \otimes S) \setminus \sigma_{uw}(T \otimes S)$.\\
	\item[(ii)] Let $\lambda \in \sigma_a(T \otimes S) \setminus \sigma_{uw}(T \otimes S)$. Then $\lambda \neq 0$ and there exists $\mu \in \sigma_a(T) \setminus \sigma_{uw}(T)$ and $\nu \in \sigma_a(S) \setminus \sigma_{uw}(S)$ such that  $\lambda= \nu \mu$. This implies that $ \lambda \in \pi_{00}(T \otimes S) \subseteq E(T\otimes S)$ .
	Thus $ \sigma_a(T \otimes S)= \setminus \sigma_{uw}(T \otimes S) \subseteq E(T \otimes S)$. Hence $T \otimes S$ satisfies property $(UW_E)$.
\end{enumerate}
\end{proof}
\begin{remark}
	
From Theorem 3.9 and \cite[Thorem 1]{dug}, we see that if 
$T$ and $S$ are isoloid operators satisfying property $(UW_E)$ and $0 \notin \sigma_p(T \otimes S)$. Then $ T \otimes S\in (UW_E)$ is equivalent to $T \otimes S\in (w)$ and  $T \otimes S \in (a-Bt)$ whenever $\sigma_{uw}(T \otimes S)=\sigma_{a}(T )\sigma_{uw}(S )\cup\sigma_{uw}(T)\sigma_{a}(S )$.
\end{remark}
 The commutator of two operators $P$ and $Q$ is defined by $[P,Q]=PQ-QP$. If $Q_1$ and $Q_2$ are two quasi-nilpotent operators such that $[Q_1,A]=[Q_2,A]=0$ for some operators $A \in B(X) $ and $B \in B(Y)$ then, 
\begin{center}
	$(A+Q_1) \otimes (B+Q_2) = A \otimes B +Q$, 
\end{center}

where $Q= Q_1 \otimes B+ A\otimes Q_2+ Q_1 \otimes Q_2\in B(X \otimes Y)$ is a quasi-nilpotent operator. Duggal \cite{dug} studied the stabity of property $(w)$ under Perturbations by Riesz
operators. In \cite{aponte} Aponte, Sanabria, and V´asquez studied it for property $(V_E)$. In a similar manner, we observe that  
	let $Q_1 \in B(\mathcal{H})$ and $Q_2 \in B(\mathcal{K})$ such that $[Q_1,A]=[Q_2,B]=0$ for some operators $A \in B(\mathcal{H})$ and $B \in B(\mathcal{K})$. If $A\otimes B$ is isoloid and satisfies property $(UW_E)$, then $(A+Q_1) \otimes (B+Q_2)$  satisfies property $(UW_E)$. Also if $T,R\in B(\mathcal{H})$ be such that $T$ is isoloid and $R$ is a Riesz operator with $[T,R]=0$, $\sigma_{a}(T+R)=\sigma_{a}(T)$ and $\sigma(T+R)=\sigma(R)$. Then $T$ satisfies property $(UW_E)$ implies $T+R$ satisfies property $(UW_E)$. In addition, if $A$ and $B$ are two isoloid operators satisfying property $(UW_E)$ and $R_1$ and $R_2$ are Riesz operators such that $[A,R_1]=0=[B,R_2]$, $\sigma_a(A+R_1)=\sigma_{a}(A)$ , and $\sigma_{a}(B+R_2)=\sigma_{a}(B)$ and $\sigma(A+R_1)=\sigma(A)$ , and $\sigma(B+R_2)=\sigma(B)$ also $0 \notin \sigma_{p}(A \otimes B)$. Then, $A,B \in (UW_E) \Longrightarrow (A+R_1)\otimes (B+R_2) \in (UW_E)$ if and only if $A,B \in (w) \Longrightarrow (A+R_1)\otimes (B+R_2) \in (w)$.\\
	
\textbf{Acknowledgement} The research of first author is supported by senior research fellowship of university grants commission, India.

\end{document}